\newtheorem{theorem}{Theorem}
\newtheorem{corollary}{Corollary}
\newtheorem{lemma}{Lemma}
\def\lc{\left\lceil}   
\def\rc{\right\rceil}
\DeclareMathOperator{\sign}{sign}
\title{On the chromatic numbers of Johnson type graphs}
\author{Danila Cherkashin}
\begin{document}

\maketitle

\begin{abstract}
A Johnson type graph $J_{\pm}(n,k,t)$ is a graph whose vertex set consists of vectors from $\{-1,0,1\}^n$ of the length $\sqrt{k}$ and
edges connect vertices with scalar product $t$.
The paper determines the order of growth of the chromatic numbers of graphs $J_\pm(n,2,-1)$ and $J_\pm(n,3,-1)$ (logarithmic on $n$), and also $J_\pm(n,3,-2)$ (double logarithmic on $n$).
\end{abstract}

\section{Notation}

Recall that the \textit{chromatic number} of a graph $G$ is the minimum number of colors $\chi(G)$ required to color the vertices of the graph so that any edge has ends of different colors. An \textit{independent set} in a graph is a set of vertices, no two of which are adjacent.
The \textit{independence number} of a graph $G$ is the size of its largest independent set; the independence number is denoted by $\alpha(G)$.

A \textit{generalized Johnson graph} $J(n,k,t)$ is defined as the graph whose vertices are the vectors from the hypercube ${0,1}^n$ with exactly $k$ ones,
and whose edges connect pairs of vertices with scalar product $t$ (that is, $J(n,k,t)$ is nonempty if $k<n$ and $2k-n \leq t < k$).
\textit{Generalized Kneser graphs} $K(n,k,t)$ have the same vertex set, but edges connect pairs of vertices with scalar product at most $t$.
Johnson-type graphs, defined in the next paragraph, are a natural generalization of Johnson graphs.

A \textit{Johnson-type graph} $J_\pm(n,k,t)$ is defined as the graph whose vertices are the vectors from the set $\{0,\pm 1\}^n$ of length $\sqrt{k}$, and whose edges connect pairs of vectors with scalar product $t$.

Note that the vertices of $J_\pm(n,k,t)$ are naturally embedded in $\mathbb{R}^n$, and the distance between adjacent vertices is $\sqrt{2(k-t)}$.
Thus, $J_\pm(n,k,t)$ is a distance graph, meaning that its edges connect pairs of vertices at a fixed distance.
We assume that the coordinates are numbered, in other words, the set of coordinates is $[n] = \{1,\dots,n\}$.
The \textit{support} of a vertex is the set of its nonzero coordinates.
For $k=2$, we use the notation $a^ib^j$ for a vertex with support ${a,b}$ and signs $i,j \in \{+,-\}$ in coordinates $a$ and $b$, respectively; similar notation is used for $k=3$.

An \textit{automorphism} of a graph is a bijection of its vertex set onto itself that preserves adjacency.
A graph is called vertex-transitive if for any vertices $u$ and $v$ there exists an automorphism of the graph mapping $u$ to $v$.

Finally, let $m(a,b)$ denote the position of the most significant differing bit in the binary representations of the numbers $a$ and $b$ (the digits are numbered starting from one).

\section{Introduction}

In this note we determine the growth order of the chromatic numbers of the graphs $J_\pm(n,2,-1)$ and $J_\pm(n,3,-1)$ (logarithmic in $n$), as well as of $J_\pm(n,3,-2)$ (double logarithmic in $n$).

A survey of results on the chromatic numbers of generalized Johnson and Kneser graphs for fixed $k$ and $t$ can be found in~\cite{cherkashin2023independence}.

In~\cite{cherkashin2023independence} exact values of the independence numbers of the graphs $J_\pm(n,k,t)$ were obtained for fixed $k$, sufficiently large $n$, and zero or odd negative $t$.
Asymptotics of the independence numbers of $J_\pm(n,k,t)$ for fixed $k$, sufficiently large $n$, and negative $t$ were established in~\cite{frankl2016intersection} (see also the corrected version~\cite{frankl2019correction}).
Exact values of the independence numbers for even negative $t$ remain unknown, starting already from the case $J_\pm(n,3,-2)$.

Let us now turn to chromatic numbers. Suppose $k$ is fixed and $t$ is negative.
Then all vectors with nonnegative coordinates form an independent set $I$ with vertex density $\tfrac{1}{2^k}$.
Thus, for any Johnson-type graph $G = J_\pm(n,k,t)$ the classical inequality
\begin{equation}
\chi(G) \geq \frac{|V(G)|}{\alpha(G)}
\label{1}
\end{equation}
yields a lower bound on the chromatic number not exceeding $2^k$.
Later we will see that this inequality is often very far from sharp.

On the other hand, all graphs considered in this paper are vertex-transitive.
Therefore, for $G = J_\pm(n,k,t)$ the following inequality, valid for all vertex-transitive graphs~\cite{lovasz1975ratio}, holds:
\begin{equation}
\chi(G) \leq (1 + \ln \alpha(G)) \frac{|V(G)|}{\alpha(G)} \leq C(k) \log_2 n,
\label{2}
\end{equation}
where $C(k)$ is a constant depending only on $k$.
We will see that sometimes this estimate gives the correct order of growth in $n$, in particular for $J_\pm(n,3,-1)$.

\section{Results}

As a warm-up, let us properly color $J_\pm(n,2,-1)$ in $2 \lc \log_2 n \rc + 2$ colors. 
Let the first and second colors be assigned to the vertices with all nonnegative and all nonpositive coordinates, respectively. 
The remaining vertices are of the form $a^+b^-$. 
We color a vertex $a^+b^-$ with color $m(a,b)$ if in the bit $m(a,b)$ the number $a$ has $1$ and the number $b$, respectively, $0$; 
similarly, the vertex $a^-b^+$ is colored with $-m(a,b)$. 
It is easy to see that all vertices are colored and every edge connects vertices of different colors. 
Thus we obtain exactly $2 \lc \log_2 n \rc + 2$ colors.

The following theorem shows that the asymptotic behavior of the chromatic number is roughly twice smaller than in this example.

\begin{theorem} For all $n \geq 2$ the following inequalities hold
\[
\log_2 n \leq \chi(J_\pm(n,2,-1)) \leq \log_2 n + \left(\tfrac{1}{2}+o(1)\right)\log_2 \log_2 n.
\]
\end{theorem}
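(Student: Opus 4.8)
I would produce a copy of the ordinary shift graph inside $J_{\pm}(n,2,-1)$ and then run the standard set-labelling argument. For $1\le i<j\le n$ write $u_{ij}$ for the vertex $i^{+}j^{-}$, i.e.\ the vector with $+1$ in coordinate $i$, $-1$ in coordinate $j$, and zeros elsewhere. If $i<j<k$ then the supports of $u_{ij}$ and $u_{jk}$ meet exactly in $\{j\}$, with opposite signs there, so the scalar product is $-1$ and $u_{ij}\sim u_{jk}$. Given any proper colouring $c$ of $J_{\pm}(n,2,-1)$, set $A_i=\{\,c(u_{ij}):i<j\le n\,\}$ for $1\le i\le n$. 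If $i<i'$ then $c(u_{ii'})\in A_i$; on the other hand $c(u_{ii'})=c(u_{i'j})$ for some $j>i'$ would give a monochromatic edge $u_{ii'}\sim u_{i'j}$ (here $i<i'<j$), so $c(u_{ii'})\notin A_{i'}$, and hence $A_i\ne A_{i'}$. Thus $i\mapsto A_i$ is injective, $2^{\chi}\ge n$, and $\chi\ge\lceil\log_2 n\rceil\ge\log_2 n$.

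\textbf{Upper bound.} The vertices split into the independent set of all-nonnegative vectors $a^{+}b^{+}$, the independent set of all-nonpositive vectors $a^{-}b^{-}$, and the mixed vectors; I would spend two colours on the first two classes and concentrate on the mixed ones. Writing $a^{+}b^{-}$ as the ordered pair $(a,b)$, two mixed vertices $(a,b)$ and $(c,d)$ are adjacent precisely when exactly one of $a=d$, $b=c$ holds, so $(a,b)\sim(b,c)$ whenever $a\ne c$. The warm-up colouring gives $(a,b)$ the colour $\sign(a_{\ell}-b_{\ell})\cdot\ell$ with $\ell=m(a,b)$; its $2\lceil\log_2 n\rceil$ mixed colours come from recording, at each of the $\lceil\log_2 n\rceil$ levels $\ell$, a separate sign bit, and that is precisely where the factor $2$ is lost. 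My plan for removing it is to construct directly a family of only about $\log_2 n$ large independent sets of the mixed-vertex graph covering all mixed vertices. The obvious independent sets are the boxes $W\times([n]\setminus W)$ for $W\subseteq[n]$, but covering all ordered pairs by boxes forces a separating family of size $2\lceil\log_2 n\rceil$, which merely recovers the warm-up; the crux is that one needs a genuinely different, recursively built family. After the top $\approx\log_2 n$ level distinctions are made, the residual conflicts are, within each pair of sibling subtrees of the binary tree on $[n]$, a bipartite ``interface'' graph (in a monochromatic pair $(x,y)\sim(y,z)$ with $m(x,y)=m(y,z)=\ell$ one automatically has $x_\ell\ne y_\ell$ and $x,z$ agreeing in all positions $\ge\ell$), and all these interfaces must be reconciled with one shared auxiliary palette. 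Treating this reconciliation as a smaller instance of the same colouring problem — now on roughly $\log_2 n$ objects, and reducing again to roughly $\log_2\log_2 n$ objects, etc., with overhead about $\tfrac12\log_2 m$ colours at scale $m$ — gives a total of $\log_2 n+\tfrac12\log_2\log_2 n+O(\log\log\log n)=\log_2 n+(\tfrac12+o(1))\log_2\log_2 n$.

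\textbf{Main obstacle.} Once the shift-graph copy is spotted, the lower bound is routine. The work is entirely in the upper bound: the ``$+$-indexed'' and ``$-$-indexed'' colour families conflict with one another very densely (essentially every level of one conflicts with every level of the other), so any crude merge of the $2\lceil\log_2 n\rceil$ warm-up colours saves only an additive constant. Making the recursive reconciliation above actually work — producing independent sets that are not of the simple box form, verifying that one shared auxiliary palette suffices across all levels and all sibling pairs, and tuning the scale reduction so that the overhead is $(\tfrac12+o(1))\log_2\log_2 n$ rather than $(1+o(1))\log_2\log_2 n$ — is the step I expect to require the real effort.
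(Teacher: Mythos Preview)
Your lower bound is correct and essentially equivalent to the paper's: you embed the shift graph $S_n$ via $u_{ij}=i^{+}j^{-}$ and run the standard set-labelling argument, while the paper phrases the same idea as a covering of $K_{[n]}$ by bipartite graphs (one per colour class) and invokes $\alpha(F_{i+1})\ge\alpha(F_i)/2$. Both are standard routes to $\chi\ge\log_2 n$ and neither is materially stronger here.

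The upper bound, however, is not a proof. You correctly identify that the warm-up wastes a factor of two by keeping a separate sign bit at every level, and you outline a recursive ``reconcile the interfaces'' scheme, but you never specify the actual independent sets, never verify independence, and never pin down why the overhead comes out as $\tfrac12\log_2\log_2 n$ rather than $\log_2\log_2 n$; you yourself flag this as ``the step I expect to require the real effort.'' As written, there is no construction and hence no upper bound.

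The paper avoids recursion entirely with a one-shot Sperner-type construction that you should be aware of. For $n\le\binom{2m+1}{m}$, inject the coordinates into the $m$-subsets of $[2m+1]$ via $f$, and for each $i\in[2m+1]$ let the colour class $I_i$ consist of all $a^{+}b^{+}$ with $i\in f(a)\cap f(b)$, all $a^{+}b^{-}$ with $i\in f(a)\setminus f(b)$, and all $a^{-}b^{-}$ with $i\notin f(a)\cup f(b)$; a $(2m+2)$-nd colour picks up the remaining $a^{+}b^{+}$. Independence is a one-line sign check (a $+$ at coordinate $c$ forces $i\in f(c)$, a $-$ forces $i\notin f(c)$), and coverage of the mixed vertices uses only that two distinct $m$-subsets of $[2m+1]$ satisfy $f(a)\setminus f(b)\ne\varnothing$. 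Since $\binom{2m+1}{m}=\Theta(4^{m}/\sqrt{m})$, this gives $2m+2=\log_2 n+\tfrac12\log_2\log_2 n+O(1)$ colours. The point is that the $2m+1$ colours serve simultaneously as the ``$+$-indexed'' and ``$-$-indexed'' families---exactly the merge you were trying to engineer recursively---and the middle-layer antichain is what makes the merge lossless up to the $\sqrt{m}$ in the binomial, which is where the $\tfrac12\log_2\log_2 n$ comes from.
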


\begin{proof}
We begin with the lower bound. 
Consider the subset of vertices containing coordinates of different signs; denote it by $V_\pm$. 
Suppose we cover $V_\pm$ with independent sets $I_1,\dots,I_q$. 
Take a set $I_j$. 
If for some coordinate $a$ the vertices of $I_j$ take both signs, then every vertex of $I_j$ whose support contains $a$ is of the form $a^+b^-$ or $a^-b^+$ for some coordinate $b$. 
Thus, on coordinate $b$ the vertices of $I_j$ also take both signs. 
We call such coordinates \textit{diverse}, and the remaining coordinates \textit{positive} or \textit{negative}, respectively.

Define an auxiliary graph $G_j$, whose vertices are the coordinates, and where an edge connects a pair of coordinates if this pair forms the support of some vertex from $I_j$. 
Note that $G_j$ is bipartite: indeed, the support either contains a positive and a negative coordinate (since we only consider vertices from $V_\pm$), or two diverse ones; as we showed above, $G_j$ restricted to the diverse coordinates is a matching.

Since $I_1,\dots,I_q$ cover $V_\pm$, the graphs $G_j$ cover $K_{[n]}$, the complete graph on the coordinate set. 
It is well known that such a covering requires at least $\log_2 n$ bipartite subgraphs. 
Indeed, if $F_i := G_1 \cup \dots \cup G_i$, then $\alpha(F_{i+1}) \geq \alpha(F_i)/2$; on the other hand, $\alpha(K_{[n]}) = 1$.

Now let us move to the example. 
We will color $J_\pm(n,2,-1)$ with $2m+2$ colors for $n \leq \binom{2m+1}{m}$. 
Associate to each coordinate an $m$-element subset of $[2m+1]$; denote this assignment by $f$. 
For $1 \leq i \leq 2m+1$, the color $I_i$ consists of the vertices $a^+b^+$ for which $i \in f(a), f(b)$, 
the vertices $a^+b^-$ for which $i \in f(a)$ and $i \notin f(b)$, 
and the vertices $a^-b^-$ for which $i \notin f(a),f(b)$. 
Note that all vertices of the form $a^+b^-$ are covered by these colors: indeed, for any two $m$-element subsets $f(a), f(b)$ there exists an element $i \in f(a)\setminus f(b)$. 
Similarly, all vertices of the form $a^-b^-$ are covered, since for any two $m$-element subsets of a $(2m+1)$-element set there exists an element in their common complement. 
The last color, numbered $2m+2$, contains all vertices of the form $a^+b^+$.  

Since $J_\pm(n_1,2,-1)$ is a subgraph of $J_\pm(n_2,2,-1)$ whenever $n_1 < n_2$, it remains to check that the inequality 
\[
n \geq \binom{2m-1}{m-1}
\]
implies
\[
2m+2 \leq \log_2 n + (1+o(1)) \tfrac{1}{2}\log_2 \log_2 n.
\]
This is indeed the case, since
\[
\binom{2m-1}{m-1} = \tfrac{1}{2}\binom{2m}{m} = \Omega\!\left(\frac{4^m}{\sqrt{m}}\right).
\]
\end{proof}

Let $I$ be an independent set in the graph $J_\pm(n,3,-1)$. 
We call a coordinate $i \in [n]$ \textit{diverse} if the vertices from $I$ take both nonzero values on $i$. 
We call a vertex $v \in I$ \textit{special} if the support of $v$ contains a diverse coordinate.

\begin{lemma}
Let $I$ be an independent set in the graph $J_\pm(n,3,-1)$ for which $t$ coordinates are diverse. Then
\[
|I| \leq 8t(n-2) + \binom{n-t}{3}.
\]
\end{lemma}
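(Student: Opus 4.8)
The plan is to split $I$ into the \emph{special} vertices (those whose support meets the diverse coordinates) and the rest, and to estimate the two parts separately: the non‑special part will be trivial, and the special part will be handled one diverse coordinate at a time, with the count over the $t$ diverse coordinates producing the $8t(n-2)$ term.

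I would start with a preliminary observation used throughout: for any fixed $3$-element set $T\subseteq[n]$, at most two vertices of $I$ have support exactly $T$. Indeed, two vertices with support $T$ and sign vectors $\sigma,\tau\in\{\pm1\}^{T}$ have scalar product $3-2d$, where $d$ is the number of coordinates of $T$ on which their signs differ; this equals $-1$ exactly when $d=2$. Hence the sign vectors realized by $I$ over the support $T$ form an independent set in the graph on $\{\pm1\}^3$ whose edges join vectors at Hamming distance $2$; splitting $\{\pm1\}^3$ by the parity of the number of $-1$'s shows that graph is a disjoint union of two copies of $K_4$, so its independence number is $2$. For the non‑special vertices this already suffices: their supports lie among the $n-t$ non‑diverse coordinates, and on a non‑diverse coordinate all vertices of $I$ carry the same sign, so a non‑special vertex is determined by its support; there are at most $\binom{n-t}{3}$ of them.

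It remains to bound the number of special vertices by $8t(n-2)$, and since every special vertex contains a diverse coordinate in its support, it is enough to show that for each diverse coordinate $a$ at most $8(n-2)$ vertices of $I$ have $a$ in their support. Fix such an $a$. Because $a$ is diverse there is a vertex $v_-\in I$ with $-1$ in coordinate $a$, say with support $\{a,p,q\}$, and a vertex $v_+\in I$ with $+1$ in coordinate $a$, say with support $\{a,r,s\}$. If $u\in I$ has $+1$ in coordinate $a$ and $\operatorname{supp}(u)\cap\{p,q\}=\emptyset$, then $u$ and $v_-$ share only the coordinate $a$, where their signs are opposite, so their scalar product is $-1$, contradicting independence. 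Thus every vertex of $I$ with $+1$ in coordinate $a$ has support meeting $\{p,q\}$; there are $2n-5$ three‑element sets containing $a$ and meeting $\{p,q\}$, and by the preliminary observation each carries at most two vertices of $I$, so there are at most $2(2n-5)$ such vertices. Symmetrically, using $v_+$, there are at most $2(2n-5)$ vertices of $I$ with $-1$ in coordinate $a$. Adding up, at most $4(2n-5)=8n-20\le 8(n-2)$ vertices of $I$ contain $a$; summing over the $t$ diverse coordinates bounds the number of special vertices by $8t(n-2)$, and combining with the non‑special estimate gives the claim.

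The main obstacle is keeping the special count linear in $n$ rather than quadratic, and the two ingredients that achieve this are precisely the parity argument capping each $3$-set at two vertices and the observation that a vertex with one sign at $a$, together with a fixed vertex of the opposite sign at $a$, is forced to share a second coordinate. A minor point to keep in mind is that a vertex whose support contains several diverse coordinates is counted several times in the sum over diverse coordinates, which is harmless for an upper bound.
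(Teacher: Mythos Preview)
Your proof is correct and follows essentially the same approach as the paper: split $I$ into special and non-special vertices, bound the non-special ones by $\binom{n-t}{3}$, and for each diverse coordinate use a witness $v_+$ and $v_-$ to force the support of any other vertex through a fixed pair of coordinates. You are simply more explicit than the paper in two places: you spell out why a non-special vertex is determined by its support (the signs on non-diverse coordinates are fixed), and you prove the ``at most two vertices per $3$-element support'' fact, which the paper leaves implicit when it writes down $|I_i|\le 8(n-2)$.
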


\begin{proof}
Consider a diverse coordinate $i$. 
Let $I_i$ be the subset of vertices of $I$ whose support contains $i$.  
Then $I_i$ contains a vertex $v_+$ with support $\{i,a,b\}$ and a $+$ sign on $i$, and also a vertex $v_-$ with support $\{i,c,d\}$ and a $-$ sign on $i$ (the sets $\{a,b\}$ and $\{c,d\}$ must intersect).  
Then any vertex $v \in I_i$ with a $+$ sign on $i$ must intersect $\{c,d\}$ (otherwise $v_+$ and $v_-$ would be adjacent, contradicting the independence of $I$); similarly, any $v$ with a $-$ sign on $i$ must intersect $\{a,b\}$.  
Thus
\[
|I_i| \leq 8 (n-2) \quad \quad \mbox{and} \quad \quad \bigcup_{1\leq i \leq t} I_i \leq 8 t(n-2).
\]
We have counted all vertices having at least one diverse coordinate.  
The remaining vertices have their support on the $n-t$ non-diverse coordinates.  
Their number is at most $\binom{n-t}{3}$.
\end{proof}

\begin{corollary}
Let $I$ be an independent set in the graph $J_\pm(n,3,-1)$. 
Then the number of special vertices does not exceed $cn^2$.
\end{corollary}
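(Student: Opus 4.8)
The plan is to obtain this as an immediate consequence of the previous lemma. A special vertex of $I$ is, by definition, one whose support contains a diverse coordinate; if $I$ has exactly $t$ diverse coordinates, then the set of special vertices is precisely the union $\bigcup_{1\le i\le t} I_i$ appearing in the proof of the lemma. That proof already established $\bigl|\bigcup_{1\le i\le t} I_i\bigr|\le 8t(n-2)$, so this quantity bounds the number of special vertices as well.

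The only remaining step is to bound $t$. Since the diverse coordinates form a subset of the $n$ coordinates, trivially $t\le n$, and hence the number of special vertices is at most $8t(n-2)\le 8n(n-2)\le 8n^2$. This proves the claim with $c=8$ for every $n\ge 2$.

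I do not expect any genuine obstacle here: the statement is a direct corollary, and the only point worth pausing on is whether a sharper bound on $t$ is needed. It is not — in fact $t$ can be as large as $n$, for instance $I=\{x^+1^+2^+:3\le x\le n\}\cup\{x^-1^-2^-:3\le x\le n\}$ is independent in $J_\pm(n,3,-1)$ and every coordinate is diverse — so the crude estimate $t\le n$ is exactly what one should use to get the $O(n^2)$ bound, and no refinement of the lemma is required.
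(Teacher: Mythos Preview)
Your argument is correct and is exactly the intended derivation: the paper states the corollary without proof, relying on the bound $\bigl|\bigcup_i I_i\bigr|\le 8t(n-2)$ from the lemma together with the trivial $t\le n$. Your example showing $t$ can equal $n$ is a nice confirmation that no sharper estimate on $t$ is available or needed.
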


\begin{theorem}
For some positive constants $c,C$ and any $n > 3$ the following inequalities hold:
\[
c\log_2 n \leq \chi (J_\pm [n,3,-1] ) \leq C \log_2 n.
\]
\end{theorem}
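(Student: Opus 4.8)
The plan is to establish the two bounds separately, with the upper bound being the substantive part.

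\medskip

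\textit{Upper bound.} I would follow the template of the warm-up colouring of $J_\pm(n,2,-1)$, but account for the three-element supports. Fix $m = \lceil \log_2 n \rceil$ and injectively assign to each coordinate $a \in [n]$ its binary representation, or more flexibly an $m$-subset of a ground set of size $O(\log n)$ as in the proof of Theorem~1; a colour will be indexed by a ``bit position'' together with a small amount of extra data (the sign pattern one expects to see at that bit). Concretely, for a vertex with support $\{a,b,c\}$ and a given sign vector, I want to pick a coordinate position $p$ at which the binary expansions of $a,b,c$ are \emph{not} all equal, together with the induced partition of $\{a,b,c\}$ into the ones with bit $1$ and the ones with bit $0$ at position $p$ and the signs carried there; the colour is the tuple (position, partition type, sign type). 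Since there are $O(\log n)$ positions and only a bounded number of partition-and-sign types for a $3$-set, this uses $C\log_2 n$ colours. One must check this is a proper colouring: if two vertices $u,v$ receive the same colour and have scalar product $-1$, then at the common position $p$ they exhibit the same splitting of their supports and the same signs there, and I would argue this forces a $+$ and a $-$ to be summed into the same coordinate in a way incompatible with scalar product exactly $-1$ — i.e. derive a contradiction from the sign bookkeeping at position $p$ exactly as in the $k=2$ case, where $m(a,b)$ certified non-adjacency. The vertices whose support coordinates all agree at every bit position cannot exist once the coordinates are distinct, so every vertex gets a colour; degenerate all-nonnegative or all-nonpositive vertices can be swept into two extra colour classes as in the warm-up.

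\medskip

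\textit{Lower bound.} Here I would use inequality~(2) from the other direction is the wrong move; instead combine~(1) with the Corollary. The graph $J_\pm(n,3,-1)$ has $|V| = \Theta(n^3)$ vertices. Suppose we colour with $q$ colours, i.e. partition $V$ into independent sets $I_1,\dots,I_q$. By the Corollary, each $I_j$ contains at most $cn^2$ special vertices, so all special vertices together number $O(qn^2)$; but the non-special vertices of any $I_j$ have all their support on non-diverse coordinates, so — restricting attention to, say, the $\binom{n}{3}2^3$ vertices and the structure of non-special parts — I would show that the non-special vertices across all classes still require $\Omega(\log n)$ classes, by the same bipartite-cover argument used in Theorem~1: associate to each class an auxiliary graph on $[n]$ whose edges/triples come from non-special vertices, observe a halving of some independence-type parameter, and conclude $q = \Omega(\log n)$. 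Alternatively and more cleanly, one shows directly that the restriction of $J_\pm(n,3,-1)$ to a suitable subfamily of vertices contains a copy of $J_\pm(n,2,-1)$ (or a graph with the same $\log n$ lower bound), so $\chi \ge \chi(J_\pm(n,2,-1)) \ge \log_2 n$ by Theorem~1 and monotonicity.

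\medskip

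\textit{Main obstacle.} The delicate point is the properness check for the upper-bound colouring: with three coordinates in the support and two signs each, the ``position $p$ certifies non-adjacency'' argument is no longer immediate, because at position $p$ the split of $\{a,b,c\}$ can be $2{+}1$ in several ways and the single shared coordinate between two adjacent vertices (forced, since scalar product $-1 \neq 0$ means supports intersect) may or may not sit on the $1$-side of $p$. I expect to need a careful case analysis of how the shared coordinate(s) distribute relative to the certifying bit, and possibly to enlarge the colour data slightly (e.g. also record which element of the $3$-set is ``alone'' at position $p$) so that equal colours genuinely forbid scalar product $-1$. Getting the constant $C$ is not important; only that the number of (position, type) pairs stays $O(\log n)$, which it does for any bounded amount of type data.
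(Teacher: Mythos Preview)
You have the difficulty of the two bounds reversed. In the paper, the upper bound is one line: $J_\pm(n,3,-1)$ is vertex-transitive, so inequality~\eqref{2} (the Lov\'asz bound $\chi(G) \le (1+\ln\alpha(G))|V(G)|/\alpha(G)$) immediately gives $\chi \le C\log_2 n$. Your proposed explicit bit-position colouring is unnecessary, and as you yourself note, the properness check for it is genuinely delicate and you have not carried it out; in particular you have not specified \emph{which} position $p$ to pick when several are available, and the case analysis you anticipate is real. So your upper-bound section is, at best, an unfinished alternative to a free argument.

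For the lower bound your first sketch is on the right track and is essentially what the paper does: use the Corollary to bound the number of special vertices in each colour class by $O(n^2)$, so across $q$ classes the special vertices can ``separate'' only $O(qn)$ pairs of coordinates; meanwhile the non-diverse coordinates of each class yield a bipartite graph on $[n]$ (positive versus negative), and the Theorem~1 halving argument leaves an independent set of size $\ge n/2^q$ in their union. If $q < \tfrac{1}{3}\log_2 n$ this independent set has size $\ge n^{2/3}$, and all $\binom{n^{2/3}}{2} \gg qn$ pairs inside it would have to be separated by special vertices --- a contradiction. Your write-up is vague on exactly this counting tension, but the idea is there. Your proposed alternative --- embedding $J_\pm(n,2,-1)$ as a subgraph --- does not work in any obvious way: fixing a coordinate $c$ and looking at vertices $a^i b^j c^+$ shifts scalar products by $+1$, so adjacency in $J_\pm(n,3,-1)$ corresponds to scalar product $-2$ in the $k=2$ picture, not $-1$, and no simple modification repairs this.
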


\begin{proof}
The upper bound follows from the fact that the graph $J_\pm(n,3,-1)$ is vertex-transitive, as was shown in the introduction.

Let us turn to the lower bound.  
Consider a partition of the vertex set $V$ of the graph $J_\pm(n,3,-1)$ into independent sets $I_1, \dots, I_k$.  
Let $V_\pm \subset V$ be the set of vertices that have coordinates of both signs; the number of such vertices is $6\binom{n}{3}$.  
Note that for each pair of coordinates $(i,j)$ there are $4(n-2)$ vertices from $V_\pm$ in which $i$ and $j$ have different signs.  
We say that a set of vertices \textit{separates} a pair of coordinates $(i,j)$ if the set contains all these $4(n-2)$ vertices.  
Then, by Corollary~1, the special vertices of all independent sets in the union separate $O(kn)$ pairs of coordinates.

The completion of the proof is analogous to Theorem~1.  
Let the independent sets $I_1, \dots, I_q$ cover the graph $G$.  
Consider the bipartite graph $F_i$ on the set of coordinates, whose parts are the uniform coordinates of $I_i$ with positive and negative signs.  
As was shown in the proof of Theorem~1, the graph $F := F_1 \cup \dots \cup F_q$ has an independent set of size at least $n/2^q$.  
If $q < \tfrac{1}{3}\log_2 n$, then
\[
\alpha(F) \geq n^{2/3}.
\]
Then all pairs of coordinates on an independent set of size $n^{2/3}$ must be separated by diverse coordinates;  
there are asymptotically more such pairs than $kn$ --- a contradiction.
\end{proof}

\begin{theorem} For all $n \geq 3$ the following inequalities hold:
\[
\lc \log_2 \lc \log_2 n \rc \rc \leq \chi (J_\pm [n,3,-2] ) \leq 4 \lc \log_2 \lc \log_2 n \rc \rc + 6.
\]
\label{mainTH}
\end{theorem}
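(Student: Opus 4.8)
The plan is to reduce $J_\pm(n,3,-2)$ to a double‑shift graph on the coordinate set, and then handle the two bounds by the warm‑up colouring and a standard shift‑graph lower bound respectively.

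First I would record the adjacency rule: two vertices are adjacent iff their supports meet in exactly two coordinates and carry opposite signs on both of them (scalar product $-2$ forces this, since an intersection of size $3$ gives an odd value). Three consequences: (i) $J_\pm(n,3,-2)$ is triangle‑free; (ii) the all‑positive vertices form an independent set, and so do the all‑negative ones; (iii) every remaining vertex has exactly two $+$'s ("two‑plus") or exactly two $-$'s ("two‑minus"), and a two‑plus vertex is encoded by a pair $(S,\mu)$ with $S\subseteq[n]$, $|S|=3$, and $\mu\in S$ the coordinate carrying the unique $-$ (the "mark"). In this encoding the two‑plus subgraph $\Gamma$ has an edge between $(S,\mu)$ and $(S',\mu')$ precisely when $S\cap S'=\{\mu,\mu'\}$ and $\mu\neq\mu'$; the two‑minus subgraph is isomorphic to $\Gamma$ via the global sign flip, and there are no edges inside the all‑positive or inside the all‑negative part.

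For the lower bound I would note that the double shift graph $\Sigma_2(n)$ — vertices the triples $i<j<k$, edge $\{i,j,k\}\sim\{j,k,l\}$ whenever $i<j<k<l$ — maps homomorphically into $\Gamma$ by $\{i<j<k\}\mapsto(\{i,j,k\},j)$ (mark the median). Since $\Sigma_2(n)=S(H_n)$, the shift graph of the ordinary shift graph $H_n=S(K_n)$, I would invoke the standard estimate $\chi(S(D))\ge\lceil\log_2\chi(D)\rceil$ for acyclically oriented $D$: in a proper colouring of $S(D)$ the set of colours on arcs into a vertex of $D$ is disjoint from the set of colours on arcs out of it, so $v\mapsto\{\text{colours on arcs into }v\}$ properly colours $D$ with $2^{\chi(S(D))}$ colours. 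Applying this twice, $\chi(\Sigma_2(n))\ge\lceil\log_2\chi(H_n)\rceil$ and $\chi(H_n)\ge\lceil\log_2\chi(K_n)\rceil=\lceil\log_2 n\rceil$, hence $\chi(J_\pm(n,3,-2))\ge\chi(\Gamma)\ge\chi(\Sigma_2(n))\ge\lceil\log_2\lceil\log_2 n\rceil\rceil$.

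For the upper bound I would use one colour for the all‑positive vertices, one for the all‑negative vertices, and two disjoint palettes of size $2\lceil\log_2\lceil\log_2 n\rceil\rceil+2$ for the two‑plus and two‑minus subgraphs; edges between the latter two are then automatically proper, and the two special colours are new, giving $4\lceil\log_2\lceil\log_2 n\rceil\rceil+6$ in all. Everything reduces to colouring $\Gamma$ with $2\lceil\log_2\lceil\log_2 n\rceil\rceil+2$ colours, which I would do by a two‑level imitation of the warm‑up colouring of $J_\pm(N,2,-1)$: using the binary expansions of the coordinates, to a vertex $(S,\mu)$ I attach the two branch‑levels $p<q$ in $[\lceil\log_2 n\rceil]$ of the binary trie of $S$ (the level where the "outlier" of $S$ separates, and the level where the other two coordinates separate), together with sign data read off — as in the warm‑up — from which coordinates carry a $1$ at the branch‑levels and from their own $\pm$. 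The aim is that this be a homomorphism from $\Gamma$ to a copy of $J_\pm(\lceil\log_2 n\rceil,2,-1)$: along a $\Gamma$‑edge the two branch‑level pairs share the branch‑level of the common coordinate pair, and the attached sign flips there. One then finishes with the warm‑up colouring of $J_\pm(\lceil\log_2 n\rceil,2,-1)$, which uses $2\lceil\log_2\lceil\log_2 n\rceil\rceil+2$ colours.

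The step I expect to be the main obstacle is making this last homomorphism correct. The trie topology of $S$ — which coordinate is the outlier, hence which branch‑level plays the "separating" and which the "secondary" role — need not be preserved along a $\Gamma$‑edge, because the shared coordinate pair need not be the "cluster" of both endpoints; so one must enumerate the (bounded) configurations and in each choose the sign label so that it genuinely flips across the edge, while also checking that the degenerate cases (the two branch‑levels coinciding, or the two endpoints receiving the same branch‑level pair) can be absorbed without extra colours. Pinning down the constant exactly as $4\lceil\log_2\lceil\log_2 n\rceil\rceil+6$, rather than merely $O(\log\log n)$, is a matter of doing this bookkeeping carefully.
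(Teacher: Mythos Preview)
Your lower bound is essentially the paper's, phrased in shift-graph language: the paper restricts to the alternating vertices $a^+b^-c^+$, $a^-b^+c^-$ (your mark-at-median vertices), passes to the shift graph $H$ on coordinate pairs, shows each colour class yields a bipartite subgraph of $H$, and iterates the bipartite-cover bound. That is exactly your two applications of $\chi(S(D))\ge\lc\log_2\chi(D)\rc$.

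The genuine gap is in the upper bound, at the spot you yourself flag. The map $\Gamma\to J_\pm(\lc\log_2 n\rc,2,-1)$ by branch-level pairs is not a homomorphism. Take $\mu=1$, $\mu'=5$, $x=2$, $y=6$ (in three-bit binary: $001,101,010,110$). Then $S=\{1,2,5\}$ and $S'=\{1,5,6\}$ both have branch-level pair $\{1,2\}$, while $(S,1)=1^-2^+5^+$ and $(S',5)=1^+5^-6^+$ are $\Gamma$-adjacent (shared pair $\{1,5\}$, opposite signs on both). Any branch-level based map sends them to vertices of $J_\pm(N,2,-1)$ with identical support, whose scalar product lies in $\{-2,0,2\}$, never $-1$. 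Notice that one endpoint has its mark at the minimum and the other at the median --- precisely the trie-topology mismatch you worried about --- so ``absorbing'' this is not a bookkeeping detail but the whole problem, and you have not said how to do it.

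The paper sidesteps this by slicing more finely \emph{before} any bit arithmetic. Of the eight sign patterns on $a<b<c$, six --- $a^+b^+c^+$, $a^+b^+c^-$, $a^+b^-c^-$, $a^-b^+c^+$, $a^-b^-c^+$, $a^-b^-c^-$ --- are independent sets (direct check) and cost one colour each. Only the two alternating patterns $a^+b^-c^+$ and $a^-b^+c^-$ remain, and each of these induced subgraphs \emph{is} $\Sigma_2(n)$ on the nose. For $\Sigma_2(n)$ the paper gives the explicit colouring
\[
\{a<b<c\}\ \longmapsto\ \sign\!\bigl(m(a,b)-m(b,c)\bigr)\cdot m\!\bigl(m(a,b),m(b,c)\bigr),
\]
using that $m(a,b)\neq m(b,c)$ always and that along a $\Sigma_2$-edge the ordered pair $(m(a,b),m(b,c))$ shifts to $(m(b,c),m(c,d))$; this takes $2\lc\log_2\lc\log_2 n\rc\rc$ colours per pattern, giving $4\lc\log_2\lc\log_2 n\rc\rc+6$ overall. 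In your language: first peel off the mark-at-min and mark-at-max vertices from $\Gamma$ (each an independent set, two free colours), after which what remains really is $\Sigma_2(n)$ and your double warm-up idea goes through without any degenerate cases to absorb.
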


\begin{proof}
We begin with the lower bound.  
Suppose that we cover all vertices of the graph by independent sets $I_1, \dots, I_q$.  
Fix an order on the coordinates $[n]$ and consider only the subset of vertices $V_{alt}$ in which the signs alternate, that is, vertices with support $\{a,b,c\}$, where $a < b < c$, are of the form $a^+b^-c^+$ and $a^-b^+c^-$.

Consider the auxiliary graph $H$, whose vertices are unordered pairs of coordinates, and whose edges connect pairs of the form $\{a,b\}$ and $\{b,c\}$, where $a < b < c$.  
Then for each edge of $H$ on the union of vertices (as a support) there are two vertices of the graph $G$ from $V_{alt}$; conversely, the support of any vertex from $V_{alt}$ is uniquely obtained as the union of the vertices corresponding to an edge of $H$.

\begin{lemma}
For each $1 \leq j \leq q$ the edges corresponding to the supports of $I_j$ form a bipartite subgraph in $H$.
\end{lemma}

We denote this subgraph by $H_j$.

\begin{proof}
Assign two labels to each vertex from $I_j \cap V_{alt}$: if the vertex has the form $a^+b^-c^+$ (with respect to the order on $[n]$), then the coordinate pairs $\{a,b\}$ and $\{b,c\}$ receive the labels $L$ and $R$, respectively; if it has the form $a^-b^+c^-$, then vice versa.  
Suppose some coordinate pair $\{d,e\}$ receives both label $L$ and label $R$ from vertices $v_1$ and $v_2$.  
Then the supports of $v_1$ and $v_2$ coincide, otherwise their scalar product equals $-2$, contradicting the independence of $I_j$.  
But then the support of any other vertex from $I_j \cap V_{alt}$ does not contain the pair $\{d,e\}$, otherwise an edge would appear either with $v_1$ or with $v_2$.

Now put all vertices of $H$ with only label $L$ in one part, and those with only label $R$ in the other.  
The subgraph on these vertices is bipartite.  
Vertices with both labels, as shown above, are isolated, so adding them preserves bipartiteness.
\end{proof}

Let the complete bipartite subgraphs on the parts of the subgraphs $H_i$ partition $V(H)$ into independent (in $H$) sets $J_1,\dots,J_w$.

\begin{lemma}
Let $J$ be an independent set in the graph $H$.  
Then there exists a partition $[n] = B \sqcup E$ such that for any vertex $\{b,e\} \in J$ one has $b \in B$, $e \in E$, and $b < e$.
\end{lemma}

\begin{proof}
No coordinate can be the first in a vertex $j_1 \in J$ and the second in a vertex $j_2 \in J$, since $J$ is an independent set and such $j_1$ and $j_2$ would form an edge.  
This allows us to define $B$ as the set of first coordinates of vertices $j \in J$, and $E$ as $[n] \setminus B$.
\end{proof}

For each graph $J_i$ consider the partition $B_i \sqcup E_i$ from Lemma~2.  
Suppose that some pair of coordinates does not lie in different parts of any partition $B_i \sqcup E_i$.  
Then the corresponding vertex of $H$ does not belong to the union of the independent sets $J_1,\dots,J_w$, a contradiction.  
Consider the union $F$ of the bipartite graphs on the parts $B_i,E_i$ for $1 \leq i \leq w$.  
All parts of $F$ have size $1$, hence $w \geq \lc \log_2 n \rc$, analogously to the corresponding part of the proof of Theorem~1.  

It follows that the subgraphs $H_1, \dots, H_q$ partition $V(H)$ into at least $\lc \log_2 n \rc$ sets, that is, the number of subgraphs is at least $\lc \log_2 \lc \log_2 n \rc \rc$, which completes the proof of the lower bound.

Let us now turn to the upper bound and exhibit a coloring of the graph in $4 \lc \log_2 \lc \log_2 n \rc \rc + 6$ colors.  

First, color all vertices of the form $a^+b^-c^+$, where $a < b < c$, in $2 \lc \log_2 \lc \log_2 n \rc \rc$ colors.  
Assign to such a vertex the color $\sign (m(a,b) - m(b,c)) \cdot m( m(a,b), m(b,c) )$.  
Note that for natural numbers $x < y < z$ one has $m(x,y) \neq m(y,z)$.  
Indeed, from $x < y$ it follows that in the bit $m(x,y)$ the number $x$ takes the value $0$ while $y$ takes $1$; a similar argument for $y$ and $z$ yields a contradiction.

Suppose there exists a pair of vertices of color $j$ with supports $\{a,b,c\}$, where $a < b < c$, and $\{b,c,d\}$ with scalar product $-2$.  
Then, since the first vertex is of the form $a^+b^-c^+$, the second vertex has signs $b^+c^-$, hence the coordinates are ordered $a < b < c < d$.  
Since the vertices are monochromatic, the expressions $m(a,b) - m(b,c)$ and $m(b,c) - m(c,d)$ have the same sign.  
If $m(a,b) < m(b,c) < m(c,d)$, then $m( m(a,b), m(b,c) ) = m( m(b,c), m(c,d) )$ implies that $m(b,c)$ has bit $|j|$ equal to $1$ on the one hand and $0$ on the other; a contradiction.  
The case $m(a,b) > m(b,c) > m(c,d)$ is handled similarly.

Analogously, one can color the vertices of the form $a^-b^+c^-$ ($a < b < c$) in $2 \lc \log_2 \lc \log_2 n \rc \rc$ colors.

Finally, the last six colors consist of the vertices of the forms $a^+b^+c^+$, $a^+b^+c^-$, $a^+b^-c^-$, $a^-b^+c^+$, $a^-b^-c^+$ and $a^-b^-c^-$, respectively, where $a < b < c$.  
A direct check shows that under this coloring there are no monochromatic edges.
\end{proof}

\section{Discussion}

\paragraph{A contest version} of of Theorem~\ref{mainTH} was proposed at the student competition IMC 2022. I am grateful to John Jayne  for his strong recommendation not to attend this event,  which turned out to be very timely.

\begin{quote}
\textbf{Problem 4.}  Let $n > 3$ be an integer. Let $\Omega$ be the set of all triples of distinct elements of
$\{1, 2, \dots , n\}$. Let $m$ denote the minimal number of colours which suffice to colour $\Omega$ so that
whenever $1 \leq a < b < c < d \leq n$, the triples $\{a, b, c\}$ and $\{b, c, d\}$ have different colours. Prove
that
\[
\frac{1}{100} \log \log n \leq m \leq 100 \log \log n.
\]
\end{quote}

\paragraph{Further questions.} Determining the chromatic numbers of the graphs $J_\pm (n,k,t)$ and $K_\pm (n,k,t)$ in the general case seems to be a hopeless task.  
In the case of fixed $k$ and $t$, the methods mentioned in the introduction (inequalities~\eqref{1} and~\eqref{2}) give a logarithmic in $n$ gap between the upper and lower bounds.  
We have shown that for negative $t$ the chromatic number may differ in order from both of these bounds.  
For nonnegative $t$ no progress is known to the author.

\paragraph{Acknowledgements.} The work was supported by grant 21-11-00040 of the Russian Science Foundation.
The author thanks Fedor Petrov, Alexey Gordeev and Pavel Prozorov for their interest in the work and significant assistance in presenting the results.

\bibliographystyle{plain}
\bibliography{main}

\end{document}